\newtheorem{theorem}{Theorem}[section]
\newtheorem{lemma}[theorem]{Lemma}
\newtheorem{proposition}[theorem]{Proposition}
\theoremstyle{definition}
\theoremstyle{remark}
\newtheorem{remark}[theorem]{Remark}
\numberwithin{equation}{section} \errorcontextlines=0
\newcommand{\la}{\lambda}
\begin{document}

\title[Irreducible projective characters of wreath products] 
{Irreducible projective characters of wreath products} 
\author{Xiaoli Hu}
\address{School of Sciences,
South China University of Technology, Guangzhou 510640, China}
\email{xiaolihumath@163.com}
\author{Naihuan Jing*}
\address{Department of Mathematics,
   North Carolina State University,
   Raleigh, NC 27695, USA}
\email{jing@math.ncsu.edu}
\keywords{wreath products, spin groups, projective characters}
\subjclass[2010]{Primary: 20C25; Secondary: 20C30, 20E22}
\thanks{*Corresponding author.}

\begin{abstract}
The irreducible character values of the spin wreath products $\widetilde{\Gamma}_n=\Gamma\wr \widetilde{S}_n$
of the symmetric group and a finite group $\Gamma$ are completely determined for arbitrary $\Gamma$.

\end{abstract}
\maketitle
\section{Introduction}

The symmetric group has been one of the core materials
in mathematics and theoretical physics since Frobenius developed
representation theory of finite groups.
Using the duality between $S_n$ and $\mbox{GL}_m$ Schur found the complete set of invariants for the general linear group and
revolutionized the theory of symmetric functions using the Schur functions in his dissertation.
In the thirties Specht \cite{Sp} generalized Schur and Frobenius'
 theory to the wreath products of the symmetric group and any finite
 group. More recently Macdonald reformulated Specht's theory in the classic monograph \cite{M}.

The double covering groups $\widetilde{S}_n$ of the symmetric group have many similar and interesting
properties as shown by Schur in \cite{S}. In that seminal paper Schur
generalized Frobenius's theory and introduced the famous Schur Q-functions.
Schur's character theory of $\widetilde{S}_n$ consists of two parts.
The first part of the character values on conjugacy classes associated to partitions
with odd integer parts
are exactly given by an analogous Frobenius formula in terms of the
Schur Q-functions; the second part of character values on strict partitions
was solved with the help of twisted tensor products of
Clifford algebras.
In the same direction,
 Morris \cite{Mo} formulated an iterative rule for computing
 the spin character values of the symmetric group,
 and Nazarov \cite{Na} constructed all irreducible representations of the spin group.
 J$\acute{o}$zefiak \cite{Jo} also computed the projective character values for
 a related double covering group of the hyperoctahedral group using similar techniques.

In \cite{FJW} the second author and collaborators
determined all irreducible character values on {\it even}
 conjugacy classes of the spin wreath products $\widetilde{\Gamma}_n$ by vertex operator calculus in the context of
 the spin McKay correspondence. The character values
 generalize the first
part of Schur's theory on $\widetilde{S}_n$. 
The key was to
show that the character values at conjugacy classes of even colored partitions (with odd integer parts) are
given by matrix coefficients of certain products of twisted vertex operators. However, the other
character values on
{\it odd} strict partition-valued functions could not be explained in the context of the McKay correspondence. To the authors'
knowledge, other available methods such as the Hopf algebraic approach \cite{HH} 
seem not helpful either.

The knowledge of the remaining character values for $\widetilde{\Gamma}_n$ would be an analog of the second part
in Schur's pioneering work \cite{S}.
In the case of $\Gamma$ being abelian, we solved the
problem in \cite{HJ} by using the Mackey-Wigner method of little groups (cf. \cite{Se})
to construct all irreducible spin representations
of the wreath products
(see also \cite{MJ} for some cases). However the method of little groups
does not work in the most general case
for arbitrary finite group $\Gamma$. It seems that a new method is
needed for determination of the irreducible characters.

The purpose of this paper is to complete the character theory of the spin wreath products $\widetilde{\Gamma}_n$ and
compute the missing part of the characters table of $\widetilde{\Gamma}_n$
for any finite group $\Gamma$. We will construct all irreducible characters
and in particular provide explicit formulas for
character values on the conjugacy classes of the second type.
It turns out that spin character values in this part can be
non-zero for more than two conjugacy classes in contrast of Schur's case, nevertheless they are still sparsely zero.
We note that the exhaustion method is used to pick up all non-zero character values, which
bears some similarity to Schur's original method.

\section{Projective representations of $\widetilde{\Gamma}_n$}
\subsection{Spin wreath products} According to Schur, there are two non-isomorphic double covering groups
of the symmetric group $S_n$ when $n\geq 4$ and $n\neq 6$. But their representations are in complete one-to-one correspondence.
We fix one of them, and let the spin symmetric  group $\widetilde{S}_n$ be the finite group generated by
$z$ and $t_i, ~(i=1, \cdots, n-1)$ with the relations:
\begin{equation}
\begin{split}
 z^2=1, ~t_i^2=(t_it_{i+1})^3=z, ~
zt_i=t_iz, ~ t_it_j=zt_jt_i, \quad (|i-j|>1).\\
\end{split}
\end{equation}

Let  $\theta_n$ be the homomorphism from $\widetilde{S}_n$
to $S_n$ sending $t_i$ to the transposition $(i,i+1)$ and $z$ to 1. This says that $\widetilde{S}_n$
is a central extension of $S_n$ by the cyclic group $\mathbb{Z}_2$.

The spin group $\widetilde{S}_n$ has a cycle presentation $\grave{a}$ la Conway \cite{Ws}.
For $i<j$, the transposition $[ij]$ is deinfed as $[ij]=t_{j-1}\cdots t_{i+1}t_it_{i+1}\cdots t_{j-1}$.
Here if $j=i+1$, we take $t_i=[i,i+1]$.
Then for $i_1<i_2<\cdots<i_k\leq n$, we define
$[i_1i_2\cdots i_k]=[i_1i_2][i_2i_3]\cdots[i_{k-1}i_k]$.
Finally for a permutation $\sigma\in S_k$ we define
\begin{equation}
[i_{\sigma(1)}i_{\sigma(2)}\cdots i_{\sigma(k)}]=z^{l(\sigma)}[i_1i_2\cdots i_k].
\end{equation}

Given a permutation $w\in S_n$. We can fix its cycle product as follows. First each cycle is written as a word lexicographically by rotating its content, then we rearrange the order of the cycles lexicographically to obtain
a unique presentation $w=\prod_{i=1}^{l}(a_{i1}\cdots a_{i\la_i})$, where $\lambda_i$ are the lengths of the cycles.
We then define the element
$t_{w}=\prod_{i=1}^{l}[a_{i1}\cdots a_{i\la_i}]\in\widetilde{S}_n$.
Note that $\theta_n^{-1}(w)=z^{p}t_{w}$. One also has that
$t_{w_1}t_{w}t_{w_1}^{-1}=z^{p}t_{w_1ww_1^{-1}}$ for $w,w_1\in S_n$, where $p=0$ or $1$.
Similarly for a partition $\rho$ we denote $t_{\rho}=t_{w(\rho)}$, where $w(\rho)=(1\cdots \rho_1)\cdots (n-\rho_l+1\cdots n)$.

For a positive integer $n$ and a finite group $\Gamma$, let $\Gamma^n=\Gamma\times\cdots\times
\Gamma$ be the $n$-fold direct product of $\Gamma$, and let $\Gamma^0=1$. The spin group
$\widetilde{S}_n$ acts on $\Gamma^n$ by permuting the components:
\begin{equation}
\begin{split}
t_{w}(g_1,\cdots,g_n)=(g_{w^{-1}(1)},\cdots,g_{w^{-1}(n)}),~
z(g_1,\cdots,g_n)=(g_1,\cdots,g_n).
\end{split}
\end{equation}
The spin wreath product $\widetilde{\Gamma}_n=\Gamma\wr
\widetilde{S}_n$ is the semi-direct product
$$\widetilde{\Gamma}_n=\Gamma^n\rtimes \widetilde{S}_n=\{(g,t)|g=(g_1,\cdots,g_n)\in\Gamma^n, t\in\widetilde{S}_n\}$$
with the multiplication
$(g,t)\cdot(h,s)=(gt(h),ts).$
The quotient group $\widetilde{\Gamma}_n/\langle z\rangle$ is isomorphic to the semidirect product
$\Gamma_n=\Gamma^n\rtimes S_n$, and the canonical homomorphism  $\theta_n$  from $\widetilde{\Gamma}_n$ to $\Gamma_n$ sends $(g,t_i)\mapsto (g,(i,i+1))$ and $(g,z)\mapsto (g,1)$. For simplicity we have used the same symbol
for the homomorphism $\theta_n$ for the wreath product.

We can define a parity $p$ for $\widetilde{\Gamma}_n$ by (here $g\in\Gamma^n, t_i\in \widetilde{S}_n$)
$$p(g,t_i)=1\, (1\leq i\leq n-1), \ \ \ \ p(g,z)=0.$$
This agrees with the usual parity for $\widetilde{S}_n$ when $\Gamma$ is the trivial group. Therefore the group algebra $\mathbb{C}[\widetilde{\Gamma}_n]$ has a superalgebra structure, and $\mathbb{C}[\Gamma\wr \widetilde{A}_n]$ is the even subspace.

\subsection{Conjugacy classes}
Let $\Gamma_{*}=\{c^i|i=0,1,\dots,r\}$ be the set of conjugacy classes
of $\Gamma$ and denote by $\Gamma^*=\{\gamma_i|i=0,1,\cdots,r\}$ the set of
irreducible characters of $\Gamma$. Let $\zeta_c$  be the order of
the centralizer of an element in the conjugacy class $c\in \Gamma_*$, then the
order of the conjugacy class $c$ is $|\Gamma|/\zeta_c$. Here for a finite set $X$ we denote by $|X|$ its cardinality. In the following we
follow Macdonald's notations \cite{M}.

A partition-valued function $\rho=(\rho(c))_{c\in\Gamma_{*}}$ defined on $\Gamma_{*}$
consists of $|\Gamma_*|$ partitions indexed by conjugacy classes $c\in\Gamma_*$. The
weight of $\rho$ is defined by $||\rho||=\sum_{c\in\Gamma_{*}}|\rho(c)|$,
and the length is given by $l(\rho)=\sum_{c\in\Gamma_{*}}l(\rho(c))$. It helps to
visualize $\rho$ as a colored partition in which each sub-partition
$\rho(c)$ is colored by $c$. Let
 $ \mathcal {P}(\Gamma_{*})$ be the set of partition-valued functions indexed by $\Gamma_{*}$.
It is well-known that the conjugacy classes of $\Gamma_n$ are parameterized
by $\mathcal {P}(\Gamma_{*})$. For
an element $(g, \sigma)\in \Gamma_n$, the permutation $\sigma$ gives rise to a
cycle partition $\la=(\la_1, \la_2, \ldots)$. For each part $\la_i=k$, which corresponds to
the cycle
$(i_1i_2\cdots i_k)$, we associate the cycle-product $g_{i_k}g_{i_{k-1}}\cdots
g_{i_1}\in \Gamma$. If the cycle-product belongs to the conjugacy class
$c$, then we color this part $\la_i$ by $c$ which turns
$\la$ into a colored partition. In this way we get the
parametrization of conjugacy classes of $\Gamma_n$ by $ \mathcal {P}(\Gamma_{*})$.
For
$\rho=(\rho(c))_{c\in\Gamma_*}\in \mathcal {P}_n(\Gamma_*)$, let $C_{\rho}$ be the corresponding conjugacy class in $\Gamma_n$.

We denote by $\mathcal {SP}(\Gamma_{*})$ the set of
partition-valued functions $(\rho(c))_{c\in \Gamma_{*}}$ in $ \mathcal
{P}(\Gamma_{*})$ such that each partition $\rho(c)$ is {\it strict}, i.e. $\rho(c)$
has distinct parts. Let $\mathcal
{OP}(\Gamma_{*})$ be the set of partition-valued functions $(\rho(c))_{c\in
\Gamma_{*}}$ on $\Gamma_{*}$ such that all parts of the partitions
$\rho(c)$ are odd integers.

For each partition $\la$ we define the {\em parity} $d(\la)=|\la|-l(\la)$.
Similarly, for a partition-valued function $\rho=(\rho(c))_{c\in
\Gamma_{*}}$, we define $d(\rho)= \|\rho\|-l(\rho)$. Then $\rho$ is {\it even} (resp. {\it odd}) if
$d(\rho)$ is even (resp. odd). We set ${\mathcal{P}}_n^0(\Gamma_{*})$ (resp. ${\mathcal{P}}_n^1(\Gamma_{*})$) to be the collections
of even (resp. odd) partition-valued functions  $\rho$ on $\Gamma_{*}$ such that $||\rho||=n$.
As a convention we denote $\mathcal {SP}_n^i(\Gamma_{*})=\mathcal
{P}_n^i(\Gamma_{*})\cap \mathcal {SP}(\Gamma_{*})$ and $\mathcal {OP}_n(\Gamma_{*})=\mathcal
{P}_n(\Gamma_{*})\cap \mathcal {OP}(\Gamma_{*})$ for $i\in \{0,1\}$.  When $\Gamma_{*}$ just
consists of a single element,
$\mathcal{P}(\Gamma_{*})$ will be simply written as $\mathcal {P}$.
Similarly we have notations
such as $\mathcal {OP}_n$, $\mathcal{SP}_n$, and $\mathcal
{SP}_n^i$.

\subsection{Split conjugacy classes}
 An element $\widetilde{x}\in\widetilde{\Gamma}_n$ is called
{\em non-split} if $\widetilde{x}$ is conjugate to $z\widetilde{x}$.
Otherwise $\widetilde{x}$ is said to be {\em split.}
 An element $x\in\Gamma_n$ is called split if
$\theta_n^{-1}(x)$ is split. A conjugacy class of $\widetilde{\Gamma}_n$ is called split if its
elements are split. It is known that the conjugacy class $C_{\rho}$ of
$\Gamma_n$ splits if and only if the preimage
$\theta_n^{-1}(C_{\rho})=:D_{\rho}$ splits into two
conjugacy classes in $\widetilde{\Gamma}_n.$

For a partition $\la=(1^{m_1}2^{m_2}3^{m_3}\cdots)$ of $n$, we denote by
$z_{\la}=\prod_{i\geq 1}i^{m_i}m_i!$ the order of the centralizer of
the permutation with cycle type $\la$ in $S_{n}.$
For each partition-valued function
$\rho=(\rho(c))_{c\in\Gamma_*}$, we define
$$Z_{\rho}=\prod_{c\in\Gamma_*}z_{\rho(c)}\zeta_c^{l(\rho(c))},
$$
which is the order of the centralizer of an element of
conjugacy type $\rho=(\rho(c))_{c\in\Gamma_*}$ in $\Gamma_n$. 
The order of the centralizer of an element of conjugacy type $\rho$ in $\widetilde{\Gamma}_n$ is given by
\begin{equation}\nonumber
\widetilde{Z}_{\rho}=\left\{ \begin{aligned}
         2Z_{\rho}, & \ \ \ C_{\rho} ~\hbox{is split},  \\
          Z_{\rho},& \ \ \ C_{\rho} ~\hbox{is non-split}.
                          \end{aligned} \right.
\end{equation}
For each split conjugacy class $C_{\rho}$ in $\Gamma_n$, we define
the conjugacy class $D_{\rho}^{+}$ in $\widetilde{\Gamma}_n$ to be
the conjugacy class containing the element $(g,t_{\rho})$ and define
$D_{\rho}^{-}=zD_{\rho}^{+}$, then
$ D_{\rho}=D_{\rho}^{+}\cup
D_{\rho}^{-}$.

As usual a representation $\pi$ of $\widetilde{\Gamma}_n$ is called
{\em spin} if $\pi(z)=-id$, then its character is a projective character of
$\Gamma_n$. It is clear that the character of
a spin representation of $\widetilde{\Gamma}_n$ are determined by
its values on split conjugacy classes. By a
standard result \cite{R, FJW} the conjugacy class of
$\widetilde{\Gamma}_n$ is split in $\widetilde{\Gamma}_n$ if and
only if either $\rho\in \mathcal {OP}_n(\Gamma_*)$ or $\rho\in
\mathcal {SP}_n^1(\Gamma_*)$.
In particular the split conjugacy classes of
$\widetilde{S}_n$ (i.e. when $\Gamma$ is the trivial group) are parameterized either by partitions with odd
integers or by odd strict partitions. By Euler's theorem the
number of strict partitions is equal to the number of odd partitions, therefore
the total number of such conjugacy classes of $\widetilde{\Gamma}_n$ are given by
$
|\mathcal{SP}_n^0(\Gamma_*)|+2|\mathcal{SP}_n^1(\Gamma_*)|.$

\section{The irreducible spin character table of $\widetilde{\Gamma}_n$}
\subsection{Schur's theory of $\widetilde{S}_n$}
Like the symmetric group $S_n$, nontrivial projective (spin) characters of $\widetilde{S}_n$ are parameterized by strict partitions $\la$ of $n$. One can classify
spin characters into the so-called {\it double spin} and
{\it associate spin} characters. The associated character $\chi'$ of a spin character
is defined to be $\chi'=sgn\cdot\chi$, where $sgn$ is the sign character.
If $\chi'=\chi$, then we say $\chi$ is a double spin
character (or self-associated).
For $\nu\in \mathcal{SP}_n$ and $d(\nu)=n-l(\nu)$ even, there
corresponds a unique irreducible (double) spin character $\Delta_{\nu}$;
for $d(\nu)$ odd, there corresponds a pair of irreducible (associate) spin characters
$\Delta_{\nu}^{\pm}$ for $\widetilde{S}_n$. Schur \cite{S}
showed that there was an analogous Frobenius formula
for the spin character values at the even conjugacy classes indexed by
partitions with odd integers.
For $\nu\in \mathcal{SP}$ the Schur $Q$-function $Q_{\nu}$ is defined by
$$Q_{\nu}(x_1, \cdots, x_n)
=2^{l}\sum_{w\in S_n/S_{n-l}}x_{w(1)}^{\nu_1}\cdots
x_{w(n)}^{\nu_n} \prod_{i<j}\frac{x_{w(i)}+x_{w(j)}}{x_{w(i)}-x_{w(j)}},
$$
where $n\geq l=l(\nu)$ and $S_{n-l}$ acts on $x_{l+1},\cdots, x_n$. It is known that
$Q_{\nu}$ is a polynomial in power sum symmetric functions $p_1, p_3, p_5, \cdots.$
As usual we will denote $p_{\alpha}=p_{\alpha_1}p_{\alpha_2}\cdots$ for a partition $\alpha$.

Schur showed that nontrivial values of the spin character $\Delta_{\nu}$ at even conjugacy classes 
are given by
\begin{equation}\label{Q}
Q_{\nu}=\sum_{\alpha\in\mathcal {OP}_n}2^{[\frac{l({\nu})+l(\alpha)+\bar{d}(\nu)}{2}]}
z_{\alpha}^{-1}\Delta_{\nu}(\alpha)p_{\alpha},
\end{equation}
where $[a]$
denotes the largest integer $\leq a$ and $\bar{d}(\nu)$ is equal to 0 (resp. 1) if $d(\nu)$ is even  (resp. odd).
Schur further proved the following results.
\begin{theorem} \cite{S} \label{Schur}
(i) For $n-l(\nu)$ even, the character $\Delta_{\nu}$ of $\widetilde{S}_n$ is determined by $\{\Delta_{\nu}(\alpha)|\alpha\in \mathcal{OP}_n\}$ (given by Eq.(\ref{Q})) and $\Delta_{\nu}(\mu)=0$ for $\mu\notin\mathcal{OP}_n$.

(ii) For $n-l(\nu)$ odd, the character $\Delta_{\nu}$ of $\widetilde{S}_n$ is determined by $\{\Delta_{\nu}(\alpha)|\alpha\in \mathcal{OP}_n\}$ (given by Eq.(\ref{Q})) and $\Delta_{\nu}(\nu)=(\sqrt{-1})^{(n-l(\nu)+1)/2}\sqrt{\nu_1\cdots\nu_k/2}$ for $\nu=(\nu_1,\cdots,\nu_k)$;
$\Delta_{\nu}(\mu)=0$ for $\mu\neq\nu$ and $\mu\in \mathcal{SP}_n^1$. Moreover, $(\Delta_{\nu})^{'}(\alpha)=\Delta_{\nu}(\alpha)$ for $n-l(\alpha)$ even and $(\Delta_{\nu})^{'}(\alpha)=-\Delta_{\nu} (\alpha)$ for $n-l(\alpha)$ odd.
\end{theorem}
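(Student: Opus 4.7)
The plan is to handle the two families of split conjugacy classes $\mathcal{OP}_n$ and $\mathcal{SP}_n^1$ separately, and to split the analysis according to the parity of $d(\nu)=n-l(\nu)$. As a first reduction, every spin character $\chi$ of $\widetilde{S}_n$ satisfies $\chi(zx)=-\chi(x)$; hence on every non-split element $x$ (which is conjugate to $zx$) one has $\chi(x)=0$. Since the split classes of $\widetilde{S}_n$ are indexed exactly by $\mathcal{OP}_n\cup\mathcal{SP}_n^1$, $\Delta_\nu$ automatically vanishes on every $\mu\notin\mathcal{OP}_n\cup\mathcal{SP}_n^1$. The values on $\mathcal{OP}_n$ are then read off from Eq.~\eqref{Q}: the $p_\alpha$ with $\alpha\in\mathcal{OP}_n$ form a basis of the degree-$n$ component of $\mathbb{C}[p_1,p_3,p_5,\ldots]$, so the expansion coefficients of $Q_\nu$ in this basis yield the $\Delta_\nu(\alpha)$ up to the explicit $2$- and $z_\alpha$-factors.

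For part (i), assuming $d(\nu)$ is even, I would invoke that $\Delta_\nu$ is self-associated, i.e.\ $\mathrm{sgn}\cdot\Delta_\nu=\Delta_\nu$. Since a permutation of cycle type $\mu$ has sign $(-1)^{d(\mu)}$, this forces $\Delta_\nu(\mu)=0$ whenever $d(\mu)$ is odd; but every $\mu\in\mathcal{SP}_n^1$ has $d(\mu)$ odd by definition, so $\Delta_\nu$ vanishes on $\mathcal{SP}_n^1$, and combined with the preceding step this gives $\Delta_\nu(\mu)=0$ for all $\mu\notin\mathcal{OP}_n$. For part (ii), with $d(\nu)$ odd, $\Delta_\nu=\Delta_\nu^{+}$ belongs to an associate pair with $(\Delta_\nu)'=\mathrm{sgn}\cdot\Delta_\nu$, which instantly gives the ``Moreover'' clause: $(\Delta_\nu)'(\alpha)=\Delta_\nu(\alpha)$ on classes with $d(\alpha)$ even and $(\Delta_\nu)'(\alpha)=-\Delta_\nu(\alpha)$ on classes with $d(\alpha)$ odd.

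The substantive part of (ii) is the computation of $\Delta_\nu$ on $\mathcal{SP}_n^1$. My approach is to use the Clifford algebra / twisted tensor product realization of $\Delta_\nu^{\pm}$ on a spinor module, in which $t_\nu$ acts, up to a diagonal normalization, as an explicit product of Clifford generators attached to the cycles of $\nu$. A direct trace computation on this module produces $\Delta_\nu(\nu)=(\sqrt{-1})^{(n-l(\nu)+1)/2}\sqrt{\nu_1\cdots\nu_k/2}$, the factor $\sqrt{-1}$ arising from the signature of the underlying bilinear form and the square root from the Pfaffian-type normalization of the spinor representation. The main obstacle is the vanishing $\Delta_\nu(\mu)=0$ for $\mu\in\mathcal{SP}_n^1\setminus\{\nu\}$: my plan is to combine column orthogonality (using the already-determined values on $\mathcal{OP}_n$ together with the known degree of $\Delta_\nu$) with a triangularity property of the Clifford realization with respect to a dominance-type order on strict partitions, which together leave only the trivial extension and close the proof.
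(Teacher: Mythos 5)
Your reductions at the start are fine: spin characters vanish on non-split classes, the split classes are exactly those over $\mathcal{OP}_n\cup\mathcal{SP}_n^1$, part (i) follows from self-associateness ($\mathrm{sgn}\cdot\Delta_\nu=\Delta_\nu$ kills all classes with $d(\mu)$ odd), and the ``Moreover'' clause in (ii) is immediate from $(\Delta_\nu)'=\mathrm{sgn}\cdot\Delta_\nu$. Note that the paper itself gives no proof of this statement (it is quoted from Schur \cite{S}), so the comparison is with Schur's method, which the paper explicitly imitates elsewhere. The genuine gap in your proposal is the heart of part (ii): the vanishing $\Delta_\nu(\mu)=0$ for $\mu\in\mathcal{SP}_n^1\setminus\{\nu\}$. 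You hang this on ``a triangularity property of the Clifford realization with respect to a dominance-type order on strict partitions,'' which you neither prove nor make precise, and which is not a standard fact in the form you need; the column-orthogonality half of your argument is likewise not carried out (and as written you seem to need both). In addition, the diagonal value $\Delta_\nu(\nu)=(\sqrt{-1})^{(n-l(\nu)+1)/2}\sqrt{\nu_1\cdots\nu_k/2}$ is only gestured at via ``Pfaffian-type normalization''; that trace evaluation is the real computational input and must be either carried out in the Clifford model or cited.

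The missing idea is the exhaustion (norm) argument, which is both Schur's original mechanism and the one the paper generalizes (Lemma \ref{lema}, Proposition \ref{prop:innerprod}, and the proof of the main theorem). From $\langle\Delta_\nu,\Delta_\nu\rangle=1$, $\langle\Delta_\nu,(\Delta_\nu)'\rangle=0$ and $(\Delta_\nu)'=\mathrm{sgn}\cdot\Delta_\nu$ one gets, exactly as in Lemma \ref{lema} with $\Gamma$ trivial, $\sum_{\mu\in\mathcal{SP}_n^1}\tfrac{1}{z_\mu}\,|\Delta_\nu(D_\mu^{+})|^2=\tfrac12$. Since $\nu$ is strict, $z_\nu=\nu_1\cdots\nu_k$, so the single term $\mu=\nu$ with $|\Delta_\nu(\nu)|^2=\nu_1\cdots\nu_k/2$ already contributes $\tfrac12$, forcing every other value on $\mathcal{SP}_n^1$ to vanish --- no ordering or triangularity is needed, only the explicit value at $\nu$. (Your column-orthogonality idea can also be made to work, but it needs the diagonal values $|\Delta_\mu(\mu)|^2=z_\mu/2$ for \emph{all} $\mu\in\mathcal{SP}_n^1$ together with the comparison of the column norm $2z_\mu$ against the ordinary-character contribution $z_\mu$; none of that is in your sketch.) Replace the triangularity step by this exhaustion computation and supply (or cite) the Clifford-algebra trace giving $\Delta_\nu(\nu)$, and the proof closes.
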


For $\widetilde{\Gamma}_n$, the values of nontrivial spin characters at the conjugacy classes associated to $\rho$ for $\rho\in\mathcal{OP}_n(\Gamma_{*})$ are given by Frenkel-Jing-Wang \cite{FJW}.
\begin{lemma}\label{lema} For an irreducible spin $\widetilde{\Gamma}_n$-character $\chi_{\la}$ with type $\la\in\mathcal{SP}_n^1(\Gamma^{*})$
 and a subset $G$ 
 of $(\widetilde{\Gamma}_n)_*$, set $\langle  \chi,\chi\rangle_{G}=\sum_{D_{\rho}\in G}\frac{1}{|\widetilde{Z}_{\rho}|}|\chi(D_{\rho})|^2$, then $\langle  \chi,\chi\rangle_{\mathcal{OP}_n(\Gamma_{*})}=\langle  \chi,\chi\rangle_{\mathcal{SP}^1_n(\Gamma_{*})}=\frac{1}{2}$ (see \cite{FJW}).
 \end{lemma}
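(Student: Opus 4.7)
The plan is to combine orthogonality of irreducible characters with the associate-character involution $\chi\mapsto\mathrm{sgn}\cdot\chi$. First, any spin character $\chi$ vanishes on non-split classes: if $x\sim zx$ then $\chi(x)=\chi(zx)=-\chi(x)$, forcing $\chi(x)=0$. Hence the orthonormality relation $\langle\chi,\chi\rangle=1$ only picks up contributions from the split classes, which by the previous subsection are parameterized by $\mathcal{OP}_n(\Gamma_*)\sqcup\mathcal{SP}^1_n(\Gamma_*)$ (the two sets are disjoint because each part of every $\rho(c)\in\mathcal{OP}$ being odd forces $d(\rho)$ to be even). Using $\chi(D_\rho^-)=-\chi(D_\rho^+)$ together with $\widetilde{Z}_\rho=2Z_\rho$ on split classes, grouping the pair $D_\rho^\pm$ for each split $\rho$ yields
\begin{equation*}
\langle\chi,\chi\rangle_{\mathcal{OP}_n(\Gamma_*)}+\langle\chi,\chi\rangle_{\mathcal{SP}^1_n(\Gamma_*)}=1.
\end{equation*}

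Next, since $\lambda\in\mathcal{SP}^1_n(\Gamma^*)$, the irreducible $\chi=\chi_\lambda$ is of associate type, so $\chi':=\mathrm{sgn}\cdot\chi$ is a distinct irreducible spin character, where $\mathrm{sgn}=(-1)^{p(\cdot)}$ is the lift of the $S_n$-sign to $\widetilde{\Gamma}_n$ via the parity $p$ defined earlier. A direct computation from the cycle presentation $t_\rho=\prod_i[a_{i1}\cdots a_{i\rho_i}]$ together with $[ij]=t_{j-1}\cdots t_it_{i+1}\cdots t_{j-1}$ (a product of an odd number of generators) gives $p(t_\rho)\equiv\sum_i(\rho_i-1)=d(\rho)\pmod 2$. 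Consequently $\mathrm{sgn}(D_\rho^+)=+1$ for $\rho\in\mathcal{OP}_n(\Gamma_*)$ (where $d(\rho)$ is even) and $\mathrm{sgn}(D_\rho^+)=-1$ for $\rho\in\mathcal{SP}^1_n(\Gamma_*)$ (where $d(\rho)$ is odd by definition).

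Applying the orthogonality $\langle\chi,\chi'\rangle=0$ of the two distinct irreducibles and repeating the grouping yields
\begin{equation*}
\langle\chi,\chi\rangle_{\mathcal{OP}_n(\Gamma_*)}-\langle\chi,\chi\rangle_{\mathcal{SP}^1_n(\Gamma_*)}=0,
\end{equation*}
and solving this together with the first displayed equation gives the claim $\langle\chi,\chi\rangle_{\mathcal{OP}_n(\Gamma_*)}=\langle\chi,\chi\rangle_{\mathcal{SP}^1_n(\Gamma_*)}=\tfrac{1}{2}$.

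The one non-formal input is the assertion that $\chi_\lambda\neq\mathrm{sgn}\cdot\chi_\lambda$ for $\lambda\in\mathcal{SP}^1_n(\Gamma^*)$, i.e.\ that such characters are genuinely of associate type — the wreath-product analog of the second half of Theorem~\ref{Schur}. This depends on the classification of irreducible spin characters of $\widetilde{\Gamma}_n$ into double-spin and associate-spin types established in \cite{FJW}; once this classification is granted, the rest of the argument is the short orthogonality manipulation above.
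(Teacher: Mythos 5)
Your proof is correct and takes essentially the same route as the paper: the paper simply packages your two orthogonality relations into the single identity $\langle\chi_{\lambda}+\chi_{\lambda}',\chi_{\lambda}+\chi_{\lambda}'\rangle_{\widetilde{\Gamma}_n}=2$, using that $\chi_{\lambda}+\chi_{\lambda}'$ is supported on the split classes and vanishes on those in $\mathcal{SP}^1_n(\Gamma_{*})$ --- which is exactly your computation $\mathrm{sgn}(D_{\rho}^{+})=(-1)^{d(\rho)}$. The one substantive input you flag, that $\chi_{\lambda}\neq\mathrm{sgn}\cdot\chi_{\lambda}$ for $\lambda\in\mathcal{SP}^1_n(\Gamma^{*})$ (the type $Q$ case of the classification in \cite{FJW}), is the same fact the paper uses implicitly to get the value $2$ on the left-hand side.
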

 \begin{proof}
 Since $\chi_{\la}^{'}(x)=(-1)^{deg(x)}\chi_{\la}(x)$ for $x\in\widetilde{\Gamma}_n$, we have that
 \begin{equation}
 \begin{split}
 2=&\langle\chi_{\la}+\chi_{\la}^{'},\chi_{\la}+\chi_{\la}^{'}\rangle_{\widetilde{\Gamma}_n}\\
 =&(\sum_{\rho\in \mathcal{OP}_n(\Gamma_{*})}+\sum_{\rho\in \mathcal{SP}^1_n(\Gamma_{*})})\frac{1}{\widetilde{Z}_{\rho}}|(\chi_{\la}+\chi_{\la}^{'})(D_{\rho})|^2\\
 =&\sum_{\rho\in \mathcal{OP}_n(\Gamma_{*})}\frac{1}{\widetilde{Z}_{\rho}}\cdot4|\chi_{\la}(D_{\rho})|^2\\
  \end{split}
 \end{equation}
 Therefore, $\sum_{\rho\in \mathcal{OP}_n(\Gamma_{*})}\frac{1}{\widetilde{Z}_{\rho}}|\chi_{\la}(D_{\rho})|^2=
 \sum_{\rho\in \mathcal{SP}_n^1(\Gamma_{*})}\frac{1}{\widetilde{Z}_{\rho}}|\chi_{\la}(D_{\rho})|^2=\frac{1}{2}.$
 \end{proof}
Table 1 shows
the status of character values. Part $D$ indicates what we will compute in
this paper: $\chi(D^{\pm}_{\rho})$ for $\rho\in \mathcal{SP}_n^1(\Gamma_{*})$.
\begin{table}[!h]
\tabcolsep 0pt
\caption{The spin character table for $\widetilde{\Gamma}_n$}
\vspace*{-15pt}
\begin{center}
\def\temptablewidth{0.95\textwidth}
{\rule{\temptablewidth}{0.8pt}}
\begin{tabular*}{\temptablewidth}{@{\extracolsep{\fill}}ccc}
 Character \ $\backslash$ Class : &  $\rho\in\mathcal{OP}_n({\Gamma_{*}})$ & $\rho\in\mathcal{SP}^1_n(\Gamma_{*})$  \\   \hline
 $\la\in\mathcal{SP}^0_n(\Gamma_{*}), \chi_{\la}$:  & {A:}~ known & { C: 0}\\
   $\langle  \chi_{\la},\chi_{\la}\rangle$:  & 1 & 0\\ \hline
  $\la\in \mathcal{SP}^1_n(\Gamma_{*}), \chi_{\la}$:  &{B:}~ known & {D:}~this paper \\
  $\langle  \chi_{\la},\chi_{\la}\rangle$:  & 1/2 & 1/2 \\
       \end{tabular*}
       {\rule{\temptablewidth}{0.8pt}}
       \end{center}
       \end{table}

\subsection{Decomposition of colored partitions}
When $\Gamma$ is a finite abelian group, we used the Mackey-Wigner method of little groups
to decompose the action of $S_n$ on the characters of $\Gamma^n$. It turns out that
the invariant subgroup of each $S_n$-orbit is a Young subgroup of $S_n$ and vice versa.
Then we can construct all spin irreducible representations
indexed by strict partition-valued functions by induction, and show that the character values
are sparsely zero and the non-zero values are given according to how the partitions are supported on various conjugacy classes (see \cite{HJ}).
This method is no longer available when $\Gamma$ is an arbitrary finite group.
Next we use a different method to compute
spin character values on odd strict partition-valued functions for a general finite group $\Gamma$.

We discuss the conjugacy classes generated by Young subgroups. For $\nu=(\nu_{\gamma})_{\gamma\in\Gamma^{*}}\in\mathcal{SP}(\Gamma^*)$, we first erase the coloring of $\nu_{\gamma_i}$ and reassign
colors arbitrarily from $\Gamma_*$. Suppose $\nu_{\gamma_i}=(\nu^i_1, \nu^i_2, \cdots, \nu^i_s)$
as an ordinary strict partition. Then for any composition $c(\underline{i})=(c^{i_1}, \cdots, c^{i_s})$ from $\Gamma_*=\{ c^0, \cdots, c^r\}$, we
assign the colors $c^{i_1}, \cdots, c^{i_s}$ consecutively to the parts of $\nu_{\gamma_i}$ to get a colored
partition $\nu^i(\underline{c})$ of the shape $\nu^i$. The resulted multi-colored partition
will be denoted by $\nu^{I}$, where $I\in\{0, \cdots, r\}^{l(\nu)}$. We let $[\nu]$ be the
collection of all these multi-colored partitions, which are parameterized by mappings $I: \{1, 2, \cdots, l(\nu)\}\to \{0, \cdots, r\}$. Therefore the cardinality of $[\nu]$ is $(r+1)^{l(\nu)}$. See the following example.
\begin{equation*}
\left(
\begin{aligned}
{\color{black}\begin{tabular}{|c|c|c|}\hline &~    & ~  \\
\hline &  \\\cline{1-2} ~
\\ \cline{1-1}
\end{tabular}},\quad
&{\color{white}\begin{tabular}{|c|c|c|}\hline\rowcolor{black}~~~&~~~ &~~~ \\
\hline \rowcolor{black}& \\\cline{1-2}
\end{tabular}}\\
\nu(\gamma_0)\quad\quad  & ~\nu(\gamma_1)
\end{aligned}
\right)
\rightsquigarrow
\left(\begin{aligned}
\begin{aligned}
&{\color{black}\begin{tabular}{|c|c|c|}\hline\rowcolor{white} ~&~    & ~  \\\cline{1-3}\end{tabular}} \\
&{\color{white}\begin{tabular}{|c|c|}\hline \rowcolor{black}  & ~  \\\cline{1-2}\end{tabular}} \\
&{\color{black}\begin{tabular}{|c|}\hline\rowcolor{white} \\\cline{1-1}\end{tabular}} \end{aligned},\quad
&\begin{aligned}
&{\color{white}\begin{tabular}{|c|c|c|}\hline \rowcolor{black}& & \\\cline{1-3}\end{tabular}} \\
&{\color{black}\begin{tabular}{|c|c|}\hline \rowcolor{white}  & \\\cline{1-2}\end{tabular}} \\
\end{aligned}\\
\nu^1({\underline{c}})\quad \quad& \nu^2({\underline{c}^{'}})\\
\end{aligned}
\right),~
 \left(  \begin{aligned}
\begin{aligned}
&{\color{white}\begin{tabular}{|c|c|c|}\hline \rowcolor{black}&~    & ~  \\\cline{1-3}\end{tabular}} \\
&{\color{black}\begin{tabular}{|c|c|}\hline \rowcolor{white}  & ~  \\\cline{1-2}\end{tabular}} \\
&{\color{black}\begin{tabular}{|c|}\hline \rowcolor{white} \\\cline{1-1}\end{tabular}} \end{aligned},\quad
&\begin{aligned}
&{\color{black}\begin{tabular}{|c|c|c|}\hline\rowcolor{white} &~ & \\\cline{1-3}\end{tabular}} \\
&{\color{white}\begin{tabular}{|c|c|}\hline\rowcolor{black}   &  \\\cline{1-2}\end{tabular}} \\
\end{aligned}\\
\nu^1({\underline{c}})\quad \quad & ~\nu^2({\underline{c}^{'}})\\
\end{aligned}    \right),\cdots
\end{equation*}

\subsection{Spin supermodules vs. spin modules}
A spin $\widetilde{\Gamma}_n$-module $V$ becomes a spin supermodule when $ch_V(x)=0$ for all odd elements $x$. According to
\cite{Jo} there are two basic types of simple supermodules: type $M$ or $Q$, corresponding to
our double spin and a pair of associated spin modules when forgetting the $\mathbb Z_2$-gradation.
Moreover all double spin and associate spin modules are realized in this way.

For  a strict partition-valued function  $\la=(\la_{\gamma})_{\gamma\in\Gamma^{*}}\in \mathcal{SP}_n(\Gamma^{*})$, let
$J_{\la}=\{\gamma\in \Gamma^{*}|\la_{\gamma}\mbox{~is  odd strict}\}$ and
$J_{\la}^{'}=\Gamma^*-J_{\lambda}=\{\gamma\in\Gamma^{*}|\la_{\gamma}\mbox{~is even strict}\}$.
Let $U_{\gamma}$ be the irreducible $\Gamma$-module associated with the irreducible character $\gamma\in\Gamma^*$. Suppose $V_{\la_{\gamma}}$ is the  irreducible
spin $\widetilde{S}_{|\la_{\gamma}|}$-supermodule determined by the strict partition $\la_{\gamma}$, then $U_{\gamma}^{\otimes |\la_{\gamma}|}\otimes V_{\la_{\gamma}}$
is a spin $\widetilde{\Gamma}_{|\la_{\gamma}|}$-supermodule.
Let $\widetilde{\Gamma}_{\la}$ be the non-trivial double cover of the Young subgroup $\Gamma^{n}\rtimes S_{(|\la_{\gamma_0}|, \cdots, |\la_{\gamma_r}|)}$. Then by \cite{FJW}, the super tensor product
$$\hat{\bigotimes}_{\gamma\in\Gamma^{*}}(U_{\gamma}^{\otimes |\la_{\gamma}|}\otimes V_{\la_{\gamma}})$$
decomposes completely into $2^{[\frac{|J_{\la}|}{2}]}$ copies of an
irreducible spin $\widetilde{\Gamma}_{\la}$-super\-module. Denote this
irreducible supermodule by $W_{\la}$.

\begin{proposition} \label{super-ord}
The underlying $\widetilde{\Gamma}_n$-module of the induced supermodule
$\mbox{Ind}_{\widetilde{\Gamma}_{\la}}^{\widetilde{\Gamma}_n}W_{\la}$ is
an irreducible double spin $\widetilde{\Gamma}_n$-module or a direct sum
of two associated irreducible ${\widetilde{\Gamma}_n}$-modules according to the supermodule is of type $M$ or $Q$ respectively.
In terms of partitions this corresponds to whether $d(\la)=n-l(\lambda)$ is even or odd.
\end{proposition}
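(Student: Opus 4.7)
The plan is to prove the proposition in three stages: first, determine the supermodule type ($M$ or $Q$) of $W_\la$ from the parity of $d(\la)$; second, show that $\mathrm{Ind}_{\widetilde{\Gamma}_\la}^{\widetilde{\Gamma}_n}W_\la$ is simple as a supermodule and of the same type as $W_\la$; third, translate between simple supermodules and ordinary $\widetilde{\Gamma}_n$-modules via the standard dictionary.

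For the first stage, the spin supermodule $V_{\la_\gamma}$ of $\widetilde{S}_{|\la_\gamma|}$ is of type $M$ when $d(\la_\gamma)$ is even and of type $Q$ when $d(\la_\gamma)$ is odd. Tensoring with the ordinary $\Gamma^{|\la_\gamma|}$-module $U_\gamma^{\otimes|\la_\gamma|}$ preserves the type, and the supertensor multiplication rules $M\,\hat\otimes\,M=M$, $M\,\hat\otimes\,Q=Q$, $Q\,\hat\otimes\,Q=2M$ applied across $\gamma\in\Gamma^*$ show that the irreducible constituent $W_\la$ of $\hat\bigotimes_\gamma\bigl(U_\gamma^{\otimes|\la_\gamma|}\otimes V_{\la_\gamma}\bigr)$ is of type $M$ or $Q$ according to the parity of $|J_\la|$. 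Since $d(\la_\gamma)$ is odd precisely when $\gamma\in J_\la$, one has $d(\la)\equiv |J_\la|\pmod 2$, so the type matches the parity of $d(\la)=n-l(\la)$.

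For the second stage, apply Mackey's irreducibility criterion in the super setting: expand $\mathrm{End}_{\widetilde{\Gamma}_n}\bigl(\mathrm{Ind}(W_\la)\bigr)$ as a direct sum of $\mathrm{Hom}_{\widetilde{\Gamma}_\la\cap g\widetilde{\Gamma}_\la g^{-1}}\bigl(W_\la,{}^gW_\la\bigr)$ over double-coset representatives $g$, which lift those for the ordinary Young subgroup in $S_n$. For any $g$ outside $\widetilde{\Gamma}_\la$, conjugation permutes the color blocks labeled by distinct characters $\gamma_i\in\Gamma^*$, so the $\Gamma^n$-isotypic data of $W_\la$ and ${}^gW_\la$ disagree (different $\gamma_i$ label the corresponding blocks), forcing the relevant Hom to vanish. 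Hence only $g=1$ contributes and $\mathrm{End}_{\widetilde{\Gamma}_n}\bigl(\mathrm{Ind}(W_\la)\bigr)\cong\mathrm{End}_{\widetilde{\Gamma}_\la}(W_\la)$, which has the same $\mathbb{Z}_2$-graded dimension and therefore the same supermodule type as $W_\la$.

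For the third stage, invoke the standard dichotomy of \cite{Jo}: a simple supermodule of type $M$ is irreducible as an ungraded module, while a simple supermodule of type $Q$ decomposes as a direct sum of two ungraded irreducibles associated via the sign twist by parity. Combined with the preceding stages this yields the claimed dichotomy according to the parity of $d(\la)=n-l(\la)$. The main obstacle will be the vanishing of cross-terms in Mackey's formula; this requires careful bookkeeping of how conjugation by non-trivial double coset representatives acts on the $\Gamma^*$-indexed tensor factors, and can be cross-checked via a Frobenius-reciprocity computation of the norm $\langle \mathrm{ch}_{\mathrm{Ind}(W_\la)},\mathrm{ch}_{\mathrm{Ind}(W_\la)}\rangle_{\widetilde{\Gamma}_n}$ compared against the value $1$ or $2$ expected for a simple super-irreducible of type $M$ or $Q$ respectively.
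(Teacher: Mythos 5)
Your argument is correct, but it takes a genuinely different route from the paper's. The paper's proof is an indirect counting argument: it writes the underlying module of $V=\mathrm{Ind}_{\widetilde{\Gamma}_{\la}}^{\widetilde{\Gamma}_n}W_{\la}$ as a sum of $m$ double-spin irreducibles plus $q$ associate pairs, restricts to the even index-two subgroup $\Gamma\wr\widetilde{A}_n$ (where $\mathrm{Res}$ of a double-spin splits in two and $\mathrm{Res}$ of an associate pair collapses), compares the resulting inner product against $\langle V,V\rangle_{\widetilde{\Gamma}_n}\in\{1,2\}$ supplied by the vertex-operator calculus of \cite{FJW}, and concludes $m+q=1$; the dichotomy is then read off. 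You instead argue structurally in three steps: you compute the type ($M$ or $Q$) of $W_{\la}$ from the super-tensor multiplication rules $M\hat\otimes M=M$, $M\hat\otimes Q=Q$, $Q\hat\otimes Q=2M$ plus the identity $d(\la)\equiv |J_{\la}| \pmod 2$; you show $\mathrm{Ind}\,W_{\la}$ is a simple supermodule of the same type via a super Mackey criterion, the cross-terms vanishing because any nontrivial $S_\mu\backslash S_n/S_\mu$ double coset rearranges the $\Gamma^n$-isotype of $W_{\la}$, which is rigid since the blocks carry pairwise distinct $U_\gamma$; finally you translate via the $M/Q$ dictionary. The trade-off is clear: the paper's route leans on the vertex-operator computations it has already built (at the cost of leaving the $\Gamma\wr\widetilde{A}_n$ inner-product bookkeeping rather terse), while yours is more self-contained and makes the irreducibility mechanism explicit — essentially Specht's classical wreath-product argument transported to supermodules. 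Be sure, when you fill in the Mackey step, to say explicitly that $\Gamma^n\subseteq\widetilde{\Gamma}_{\la}\cap g\widetilde{\Gamma}_{\la}g^{-1}$ for every $g$, so that it suffices to compare $\Gamma^n$-isotypes; otherwise the vanishing claim is not quite justified.
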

\begin{proof} This is true in a more general context. Each irreducible $\widetilde{\Gamma}_n$-supermodule of
type $M$ (resp. $Q$)
is an irreducible double spin (resp. a pair of associated spin) ${\widetilde{\Gamma}_n}$-module(s).
Suppose that the underlying $\widetilde{\Gamma}_n$-module of
our irreducible $\widetilde{\Gamma}_n$-supermodule $V=\mbox{Ind}_{\widetilde{\Gamma}_{\la}}^{\widetilde{\Gamma}_n} W_{\lambda}$ decomposes into a direct sum of
irreducible  $\widetilde{\Gamma}_n$-modules:
$$V=\sum_{i=1}^mV_i\oplus \sum_{j=1}^q(W_j\oplus W_j'),
$$
where $V_i$ are irreducible double spin modules, and $W_j$ and $W_j'$
are irreducible associate spin modules.
It follows from general theory \cite{J} of double spin and associate spin modules
 that, as an $\Gamma\wr\widetilde{A}_n$-module, $Res(V_i)$ decomposes into $V_i'\oplus V_i''$, while
$Res(W_j)$ or $Res(W_j')$ remains irreducible. Thus we will have
\begin{equation*}
\langle V, V\rangle_{\Gamma\wr\widetilde{A}_n}=2m+2q.
\end{equation*}
On the other hand we know that $\langle V, V\rangle_{\widetilde{\Gamma}_n}=1$ or $2$ according to
the spin supermodule $V$ being of type $M$ or $Q$ by vertex operator calculus
\cite{FJW}. This means that $\langle V, V\rangle_{\Gamma\wr\widetilde{A}_n}=2$, so we must have that
either $m=1$ or $q=1$.
\end{proof}

\subsection{Construction of irreducible spin characters}
Let $\tilde{\Delta}_{\la_{\gamma}}$ be the character of the supermodule $V_{\la_{\gamma}}$, then $\hat{\bigotimes}_{\gamma\in\Gamma^{*}}(\gamma^{\otimes |\la_{\gamma}|}\otimes \tilde{\Delta}_{\la_{\gamma}}))$ is the character of super tensor product $\hat{\bigotimes}_{\gamma\in\Gamma^{*}}(U_{\gamma}^{\otimes |\la_{\gamma}|}\otimes V_{\la_{\gamma}})$.
In order to  consider  the ordinary spin characters of $\widetilde{\Gamma}_{\la}$, Schur defined  the starred tensor product
$\circledast_{\gamma\in\Gamma^{*}}({\gamma}^{\otimes |\la_{\gamma}|}\otimes \Delta_{\la_{\gamma}})$ to be  an underlying ordinary irreducible component of $\hat{\bigotimes}_{\gamma\in\Gamma^{*}}({\gamma}^{\otimes |\la_{\gamma}|}\otimes \tilde{\Delta}_{\la_{\gamma}})$ (see \cite{S}).
 Let $\tilde{\chi}_{\la}$ (resp. $\tilde{\Delta}_{\la_{\gamma}}$) be the  character of irreducible supermodule $W_{\la}$ (resp. $V_{\la_{\gamma}}$). It is known that $\tilde{\chi}_{\la}$ (resp. $\tilde{\Delta}_{\la_{\gamma}}$ ) is also an irreducible ordinary character when $d(\la)$ (resp. $d(\la_{\gamma})$) is even. We denote it by $\chi_{\la}$ (resp. $\Delta_{\la}$) when it is regarded as an ordinary character. While   $\tilde{\chi}_{\la}$ (resp. $\tilde{\Delta}_{\la_{\gamma}}$ )  decomposes into two ordinary irreducible characters $\chi_{\la}$ and $\chi_{\la}^{'}$ (resp. $\Delta_{\la_{\gamma}}$ and $\Delta^{'}_{\la_{\gamma}}$)  when  $d(\la)$ (resp.  $d(\la_{\gamma})$) is odd.

\begin{remark}\label{lem}
 Let $\rho=\rho^0\cup\cdots\cup\rho^r$ be a union of partition-valued functions on $\Gamma_{*}$ whose parts are union of those of
partition-valued functions $\rho^0,\cdots,\rho^r$ on $\Gamma_{*}$. Then $D_{\rho}$ splits into two
conjugacy classes of $\widetilde{\Gamma}_{(|\rho^0|,\cdots,|\rho^r|)}$ if and only if
\\
(1) $\rho=\rho^0\cup\cdots\cup\rho^r\in \mathcal{OP}_{|\rho^0|+\cdots+|\rho^r|}(\Gamma_{*})$ or
\\
(2) $\rho^i\in\mathcal{SP}_{|\rho^i|}(\Gamma_{*})$ for $0\leq i\leq r$ and $||\rho||-l(\rho)$ is odd.
 \end{remark}
  For brevity,  we have used the numbers $0,1,\cdots,r$ by $\gamma\in\Gamma^{*}$ to index each sub-partition-valued function in $\rho$.  More precisely,  we denote $\rho_{\gamma_j}:=\rho^j$.
For $(g,t_{\rho})\in\widetilde{\Gamma}_{\la}$ with $\rho=\rho^0\cup\cdots\cup\rho^r$, one has \cite{HJ}
\begin{equation}
\begin{split}
&\chi_{\la}(g,t_{\rho})=\circledast_{\gamma\in\Gamma^{*}}({\gamma}^{\otimes |\la_{\gamma}|}\otimes \Delta_{\la_{\gamma}})(g,t_{\rho})\\
=&2^{[\frac{|J_{\la}|}{2}]}(\sqrt{-1})^{[\frac{|J_{\la}|}{2}]\cdot\prod_{\gamma\in J_{\la}}d(t_{\rho_{\gamma}})}\cdot\prod_{\gamma\in\Gamma^{*}}
{\gamma}^{\otimes |\la_{\gamma}|}\otimes \Delta_{\la_{\gamma}}(g,t_{\rho_{\gamma}}).
\end{split}
\end{equation}
Let $\tilde{\chi}_{\la}\uparrow$ be the induced character of $\tilde{\chi}_{\la}$ from $\widetilde{\Gamma}_{\la}$ to $\widetilde{\Gamma}_n$.
We have $\tilde{\chi}_{\la}\uparrow=\chi_{\la}\uparrow$ when $d(\la)$ is even and $\tilde{\chi}_{\la}\uparrow=(\chi_{\la}+\chi_{\la}^{'})\uparrow=\chi_{\la}\uparrow+\chi_{\la}^{'}\uparrow$ when $d(\la)$ is odd.
If $\chi_{\la}$ is irreducible, then $\chi_{\la}\uparrow$ is irreducible by Prop. \ref{super-ord}. Hence by Mackey's decomposition theorem and Frobenius reciprocity we obtain (cf. \cite{HJ}):
\begin{equation}\label{eq:innerprod}
\langle \chi_{\la}\uparrow,\chi_{\la}\uparrow\rangle_{\widetilde{\Gamma}_{n}}=\langle\chi_{\la},\chi_{\la}\rangle_{\widetilde{\Gamma}_{\la}}.
\end{equation}

\subsection{Spin character values}
Let $\la=(\la_{\gamma_0},\cdots,\la_{\gamma_r})$ be a strict partition valued function on $\Gamma^{*}$.
Set $\rho=\rho^0\cup\cdots\cup\rho^r$ be the partition-valued function defined in Remark \ref{lem} such that $|\rho^j|=|\la_{\gamma_j}|$ for $j=0,\cdots,r$. From Schur's results in Theorem \ref{Schur}, one sees that if $\Delta_{\la_{\gamma_j}}(t_{\rho^j})$ has a nonzero value then $\rho^j$ must be in $[{\la}_{\gamma_j}]$ for $\gamma_j\in J_{\la}$, and $\rho^j$ must lie in $\mathcal{OSP}_{|\la_{\gamma_j}|}(\Gamma_{*}):=\mathcal{OP}_{|\la_{\gamma_j}|}(\Gamma_{*})\cap \mathcal{SP}_{|\la_{\gamma_j}|}(\Gamma_{*})$ for $\gamma_j\in J_{\la}^{'}$.
then we have the following result.
\begin{proposition} \label{prop:innerprod} Let $\la=(\la_{\gamma})_{\gamma\in\Gamma^{*}}\in \mathcal{SP}_n^1(\Gamma^{*})$. If $\rho=\cup_{j=0}^{r}\rho_{\gamma_j}\in \mathcal{SP}_n^1(\Gamma_{*})$ such that $\rho_{\gamma_j}$ lies in $[{\la}_{\gamma_j}]$ for $\gamma_j\in J_{\la}$, and $\rho_{\gamma_j}$ is in $\mathcal{OSP}_{|\la_{\gamma_j}|}(\Gamma_{*})$ for $\gamma_j\in J_{\la}^{'}$, then
\begin{equation}\label{eq:5}\prod_{\gamma\in J_{\la}^{'}}\big(\sum_{\rho_{\gamma}\in \mathcal{OSP}_{|\rho_{\gamma}|}(\Gamma_{*})}\frac{1}{Z_{\rho_{\gamma}}}|\prod_{c\in\Gamma_{*}}\gamma(c)^{l(\rho_{\gamma}(c))} \Delta_{\la_{\gamma}}(t_{\rho_{\gamma}})|^2\big)=1.
\end{equation}
\end{proposition}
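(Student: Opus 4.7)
The plan is to prove equation~(\ref{eq:5}) factor by factor. Since the product is indexed over $\gamma \in J_\lambda'$ and each summand depends only on the corresponding $\rho_\gamma$, it suffices to show, for each such $\gamma$, that
\begin{equation*}
\sum_{\rho_\gamma \in \mathcal{OSP}_{|\lambda_\gamma|}(\Gamma_*)} \frac{1}{Z_{\rho_\gamma}} \Bigl|\prod_{c\in\Gamma_*}\gamma(c)^{l(\rho_\gamma(c))}\Delta_{\lambda_\gamma}(t_{\rho_\gamma})\Bigr|^2 = 1.
\end{equation*}
The key observation is that for $\gamma \in J_\lambda'$, $d(\lambda_\gamma)$ is even, so Theorem~\ref{Schur}(i) implies $\Delta_{\lambda_\gamma}$ is an irreducible double-spin (self-associate) character of $\widetilde{S}_{|\lambda_\gamma|}$. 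The starred tensor $\chi_\gamma := \gamma^{\otimes|\lambda_\gamma|}\otimes \Delta_{\lambda_\gamma}$ is therefore an irreducible double-spin character of the smaller wreath product $\widetilde{\Gamma}_{|\lambda_\gamma|}$, whose value at $(g_\gamma, t_{\rho_\gamma})$ is precisely $\prod_c \gamma(c)^{l(\rho_\gamma(c))}\Delta_{\lambda_\gamma}(t_{\rho_\gamma})$. So the summand is exactly $|\chi_\gamma(\rho_\gamma)|^2/Z_{\rho_\gamma}$ and the identity becomes a normalized character-orthogonality relation for $\chi_\gamma$.

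I would then carry out three steps. First, determine the support of $\chi_\gamma$: as a spin character it vanishes on non-split classes; as a self-associate character it vanishes on odd-parity split classes (those of $\mathcal{SP}^1_{|\lambda_\gamma|}(\Gamma_*)$-type); and by Theorem~\ref{Schur}(i), $\Delta_{\lambda_\gamma}$ itself vanishes unless the underlying partition is all-odd. Combining these constrains the support to $\mathcal{OP}_{|\lambda_\gamma|}(\Gamma_*)$. Second, use the hypothesis $\rho \in \mathcal{SP}_n^1(\Gamma_*)$: since each color class $\rho(c)$ is strict and $\rho_\gamma(c)$ is a sub-multiset of $\rho(c)$, each color class of $\rho_\gamma$ is also strict, so $\rho_\gamma \in \mathcal{SP}_{|\lambda_\gamma|}(\Gamma_*)$; intersected with the previous constraint this yields $\rho_\gamma \in \mathcal{OSP}_{|\lambda_\gamma|}(\Gamma_*)$, exactly the summation range of the claim. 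Third, apply the character orthogonality $\langle \chi_\gamma,\chi_\gamma\rangle_{\widetilde{\Gamma}_{|\lambda_\gamma|}} = 1$: each contributing split class labelled by $\rho_\gamma$ gives rise to two conjugacy classes $D^\pm_{\rho_\gamma}$ in $\widetilde{\Gamma}_{|\lambda_\gamma|}$, each of centralizer order $2 Z_{\rho_\gamma}$ and with $|\chi_\gamma(D^\pm_{\rho_\gamma})|^2 = |\chi_\gamma(\rho_\gamma)|^2$, so orthogonality simplifies to $\sum_{\rho_\gamma\in\mathcal{OSP}_{|\lambda_\gamma|}(\Gamma_*)}|\chi_\gamma(\rho_\gamma)|^2/Z_{\rho_\gamma} = 1$; multiplying over $\gamma\in J_\lambda'$ then completes the proof.

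The main obstacle is the third step: establishing cleanly that the restricted orthogonality sum over $\mathcal{OSP}_{|\lambda_\gamma|}$, rather than the entire support $\mathcal{OP}_{|\lambda_\gamma|}$, still equals $1$. A priori the sum over the larger $\mathcal{OP}_{|\lambda_\gamma|}$ is the one delivered by standard orthogonality, and contributions from $\rho_\gamma$ with a repeated part inside some color class are nontrivial. Handling this will require a careful resummation over colorings of a fixed underlying odd partition, presumably leveraging the identity $\sum_{c\in\Gamma_*}|\gamma(c)|^2/\zeta_c = \langle\gamma,\gamma\rangle_\Gamma = 1$ to reduce, color class by color class, to Schur's original orthogonality $\sum_{\alpha\in\mathcal{OP}_{|\lambda_\gamma|}}|\Delta_{\lambda_\gamma}(\alpha)|^2/z_\alpha = 1$ for $\widetilde{S}_{|\lambda_\gamma|}$. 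Making this resummation compatible with the $\mathcal{OSP}$ restriction forced by the hypothesis $\rho\in\mathcal{SP}_n^1(\Gamma_*)$ will be the most delicate part.
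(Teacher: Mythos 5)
Your opening reduction---``it suffices to show, for each $\gamma\in J_\lambda'$, that $\sum_{\rho_\gamma\in\mathcal{OSP}_{|\lambda_\gamma|}(\Gamma_*)}\frac{1}{Z_{\rho_\gamma}}|\prod_c\gamma(c)^{l(\rho_\gamma(c))}\Delta_{\lambda_\gamma}(t_{\rho_\gamma})|^2=1$''---is where the proposal breaks, and the obstacle you flag at the end is not a delicate technicality but a genuine obstruction: that per-factor identity is false. Take $\Gamma=\mathbb{Z}_2$, $\gamma$ the trivial character, $\lambda_\gamma=(3)$ (e.g.\ inside $\lambda=((3),(2,1))$, so that $d(\lambda)$ is odd and $\gamma\in J_\lambda'$). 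Then $\mathcal{OSP}_3(\Gamma_*)$ contains only the two one-part colorings of $(3)$ (with two colors one can never color $(1,1,1)$ so that every color class is strict), each with $Z_{\rho_\gamma}=3\cdot 2=6$ and $|\Delta_{(3)}(t_{\rho_\gamma})|=1$, so the factor equals $\tfrac13$, not $1$. The sum that orthogonality of the irreducible double spin character $\gamma^{\otimes 3}\otimes\Delta_{(3)}$ actually delivers is the one over all of $\mathcal{OP}_3(\Gamma_*)$, and the missing $\tfrac23$ comes exactly from the classes with a repeated part inside one color that you would need to discard. For the same reason the resummation you hope for, based on $\sum_c|\gamma(c)|^2/\zeta_c=1$, cannot close the gap: that identity is precisely what proves the full $\mathcal{OP}$-sum equals $1$, and hence certifies that the $\mathcal{OSP}$-restricted sum has a strict deficit. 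The computation also shows that the inner sums in (\ref{eq:5}) cannot be treated as free, isolated statements about a single $\gamma$; any correct argument has to keep track of how the constraint $\rho\in\mathcal{SP}^1_n(\Gamma_*)$ and the parity bookkeeping couple the different tensor factors.

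The paper's proof is global rather than factor-by-factor, and that is the ingredient your proposal is missing. It evaluates $\langle\chi_\lambda\uparrow,\chi_\lambda\uparrow\rangle$ restricted to the odd split classes, i.e.\ those indexed by $\mathcal{SP}^1_n(\Gamma_*)$, which equals $\tfrac12$ by Lemma \ref{lema} (the equal splitting of the norm of an associate spin character between the $\mathcal{OP}$ and $\mathcal{SP}^1$ classes); it then expands this restricted inner product over decompositions $\rho=\cup_\gamma\rho_\gamma$ adapted to the Young subgroup $\widetilde\Gamma_\lambda$, where the requirement $\bar d(\rho_\gamma)=\bar d(\lambda_\gamma)$ together with strictness of the color classes and the vanishing of the double spin characters $\Delta_{\lambda_\gamma}$ off $\mathcal{OP}$ is what cuts the $J_\lambda'$-sums down to $\mathcal{OSP}$. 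The factors indexed by $\gamma\in J_\lambda$ are then themselves evaluated by a second application of Lemma \ref{lema}, to the associate spin characters $\gamma^{\otimes|\lambda_\gamma|}\otimes\Delta_{\lambda_\gamma}$ of $\widetilde\Gamma_{|\lambda_\gamma|}$, each contributing $\tfrac12$ against compensating powers of $2$; equating the two evaluations of the same restricted inner product is what forces the product over $J_\lambda'$ to equal $1$. So the decisive inputs are the $\mathcal{SP}^1$-side norm $\tfrac12$ used at both levels ($\widetilde\Gamma_n$ and the factors $\widetilde\Gamma_{|\lambda_\gamma|}$), neither of which appears in your plan, and without them the statement you reduce to in your third step is not just hard to prove---it is untrue.
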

\begin{proof} Write $m=|J_{\la}|$ then we have (Note: here $m$ is odd)
\begin{equation}\label{eq:1}
\begin{split}
&\langle\chi_{\la}\uparrow,\chi_{\la}\uparrow\rangle_{\widetilde{\Gamma}_n-\Gamma\wr \widetilde{A}_n}\\
=&2^{-[\frac{m}{2}]\cdot 2}\sum_{\substack{\rho=\cup_{\gamma\in\Gamma^{*}}\rho_{\gamma}\in \mathcal{SP}^1_n(\Gamma_{*})\\ \rho_{\gamma}\in \mathcal{SP}_{|\la_{\gamma}|}(\Gamma_{*})}}\frac{1}{\widetilde{Z}_{\rho}}|\big(\circledast_{\gamma\in\Gamma^{*}}\gamma^{\otimes |\la_{\gamma}|}\otimes \Delta_{\la_{\gamma}}\big)(D_{\rho})|^2\\
=&2^{-m+1}\cdot\sum_{\substack{\rho=\cup_{\gamma_i\in\Gamma^{*}}\rho_{\gamma_i}\in \mathcal{SP}^1_n(\Gamma_{*})\\ \rho_{\gamma_i}\in \mathcal{SP}_{|\la_{\gamma_i}|}(\Gamma_{*})}}2^{m-2}
\prod_{\gamma\in\Gamma^{*}}\big(\frac{1}{Z_{\rho_{\gamma}}}|\gamma^{\otimes
|\la_{\gamma}|}\otimes \Delta_{\la_{\gamma}}(D_{\rho_{\gamma}})|^2\big)\\
=&\frac{1}{2}\prod_{\gamma\in
\Gamma_{*}}\big(\sum_{\bar{d}(\rho_{\gamma})=\bar{d}(\la_{\gamma}), \rho_{\gamma}\in\mathcal{SP}_{|\la_{\gamma}|}(\Gamma_{*})}
\frac{1}{Z_{\rho_{\gamma}}}|\gamma^{\otimes
|\la_{\gamma}|}\otimes \Delta_{\la_{\gamma}}
(D_{\rho_{\gamma}})|^2\big)\\
\end{split}
\end{equation}
As $Z_{\rho}=\prod_{\gamma\in\Gamma^{*}}Z_{\rho_{\gamma}}$ and $\widetilde{Z}_{\rho_{\gamma}}=2Z_{\rho_{\gamma}}$ for $\gamma\in J_{\la}$,
so Eq. (\ref{eq:1}) becomes

\begin{equation}\label{eq:2}
\begin{split}
=&\frac{1}{2}\prod_{\gamma\in
J_{\la}}\big(\sum_{\rho_{\gamma}\in\mathcal{SP}^1_{|\la_{\gamma}|}(\Gamma_{*})}
\frac{2}{\widetilde{Z}_{\rho_{\gamma}}}|\gamma^{\otimes
|\la_{\gamma}|}\otimes \Delta_{\la_{\gamma}}
(D_{\rho_{\gamma}})|^2\big)\cdot\\
&\prod_{\gamma\in J_{\la}^{'}}\big(\sum_{\rho_{\gamma}\in \mathcal{SP}^0_{|\la_{\gamma}|}
(\Gamma_{*})}\frac{1}{Z_{\rho_{\gamma}}}|\gamma^{\otimes |\la_{\gamma}|}\otimes
\Delta_{\la_{\gamma}}(D_{\rho_{\gamma}})|^2\big)\\
=&2^{m-1}\prod_{\gamma\in J_{\la}}\langle  \gamma^{\otimes |\la_{\gamma}|}\otimes \Delta_{\la_{\gamma}},
\gamma^{\otimes |\la_{\gamma}|}\otimes \Delta_{\la_{\gamma}}\rangle_{\mathcal{SP}^1_{|\la_{\gamma}|}(\Gamma_{*})}\cdot\\
&\prod_{\gamma\in J_{\la}^{'}}\big(\sum_{\rho_{\gamma}\in \mathcal{OSP}_{|\la_{\gamma}|}
(\Gamma_{*})}\frac{1}{Z_{\rho_{\gamma}}}|\gamma^{\otimes |\la_{\gamma}|}\otimes
\Delta_{\la_{\gamma}}(D_{\rho_{\gamma}})|^2\big)\\
=&\frac{1}{2}\prod_{\gamma\in J_{\la}^{'}}\big(\sum_{\rho_{\gamma}\in \mathcal{OSP}_{|\la_{\gamma}|}
(\Gamma_{*})}\frac{1}{Z_{\rho_{\gamma}}}|\gamma^{\otimes |\la_{\gamma}|}\otimes
\Delta_{\la_{\gamma}}(D_{\rho_{\gamma}})|^2\big).
\end{split}
\end{equation}

Thus it follows from \cite{FJW} and Lemma \ref{lema} that
$$\langle  \chi_{\la}\uparrow,\chi_{\la}\uparrow\rangle_{\mathcal{OP}_n(\Gamma_{*})}= \langle  \chi_{\la}\uparrow,\chi_{\la}\uparrow\rangle_{\mathcal{SP}^1_n(\Gamma_{*})}=\frac{1}{2}. $$
Eqs. (\ref{eq:1}) and (\ref{eq:2}) imply that
\begin{equation}
\begin{split}
&\prod_{\gamma\in J_{\la}^{'}}\big(\sum_{\rho_{\gamma}\in \mathcal{OSP}_{|\rho_{\gamma}|}(\Gamma_{*})}\frac{1}{Z_{\rho_{\gamma}}}|\prod_{c\in\Gamma_{*}}\gamma(c)^{l(\rho_{\gamma}(c))} \Delta_{\la_{\gamma}}(z^pt_{\rho_{\gamma}})|^2\big)
=1.
\end{split}
\end{equation}
\end{proof}

The following theorem gives the remaining part of the spin character table for $\widetilde{\Gamma}_n$.
\begin{theorem}
Let $\la=(\la_{\gamma})_{\gamma\in\Gamma^{*}}\in \mathcal{SP}^1_n(\Gamma^{*}) $ and $\rho\in\mathcal{SP}^1_n(\Gamma_{*})$.
(i) If $\rho=\cup_{\gamma\in\Gamma^{*}}\rho_{\gamma}$ such that $\rho_{\gamma}\in [\la_{\gamma}]$
 for $\gamma\in J_{\la}$ and $\rho_{\gamma}\in\mathcal{OSP}_{|\rho_{\gamma}|}(\Gamma_{*})$ for $\gamma\in J_{\la}^{'}$, then

\begin{equation}\nonumber
\begin{split}\chi_{\la}\uparrow(D^{\pm}_{\rho})
=&\pm K_{\rho}\prod_{\gamma\in\Gamma^{*}}
\prod_{c\in\Gamma_{*}}\gamma(c)^{l(\rho_{\gamma}(c))}
\cdot\prod_{\gamma\in J_{\la}^{'}}\Delta_{\la_{\gamma}}(t_{\rho_{\gamma}})\cdot\\
&(\sqrt{-1})^{\Sigma_{\gamma\in J_{\la}}
\frac{|\la_{\gamma}|-l(\la_{\gamma})+1}{2}}\sqrt{\frac{\prod_{\gamma\in
J_{\la}}z_{\la_{\gamma}}}{2}},
\end{split}
\end{equation}
where the value of  $\prod_{\gamma\in
J_{\la}^{'}}\Delta_{\la_{\gamma}}(t_{\rho_{\gamma}})$ is given by
Schur Q-functions (see \cite{FJW}).

(ii) $\chi_{\la}\uparrow(D_{\rho}^{\pm})=0$, otherwise.
\end{theorem}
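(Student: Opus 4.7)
The plan is to compute $\chi_{\la}\uparrow(D_{\rho}^{\pm})$ directly via the Frobenius induction formula
\[
\chi_{\la}\uparrow(y)=\frac{1}{|\widetilde{\Gamma}_{\la}|}\sum_{\substack{x\in\widetilde{\Gamma}_n\\ x^{-1}yx\in \widetilde{\Gamma}_{\la}}}\chi_{\la}(x^{-1}yx),
\]
then use Schur's vanishing theorem (Theorem~\ref{Schur}) together with the explicit formula (2.6) for $\chi_{\la}$ on $\widetilde{\Gamma}_{\la}$ as a starred tensor product. First I would fix a representative $(g,t_{\rho})\in D_{\rho}^{+}$ for $\rho\in\mathcal{SP}_n^1(\Gamma_{*})$ and identify the $\widetilde{\Gamma}_n$-conjugates that land in $\widetilde{\Gamma}_{\la}$: these correspond to all ways of grouping the colored cycles of $\rho$ into a tuple $(\rho_{\gamma})_{\gamma\in\Gamma^*}$ of colored strict partitions with $|\rho_{\gamma}|=|\la_{\gamma}|$. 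By Remark~\ref{lem} each such grouping gives a split conjugacy class in $\widetilde{\Gamma}_{\la}$, so the induction sum reduces to a weighted sum over these decompositions.

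Second, I would apply formula (2.6): each summand factors as a product over $\gamma\in\Gamma^{*}$ of the character values $\gamma^{\otimes|\la_{\gamma}|}\otimes \Delta_{\la_{\gamma}}(t_{\rho_{\gamma}})$. By Schur's Theorem~\ref{Schur}(ii) the factor vanishes for $\gamma\in J_{\la}$ unless $\rho_{\gamma}\in[\la_{\gamma}]$, in which case it equals $(\sqrt{-1})^{(|\la_{\gamma}|-l(\la_{\gamma})+1)/2}\sqrt{\la_{\gamma,1}\cdots\la_{\gamma,k}/2}$; by Theorem~\ref{Schur}(i) the factor vanishes for $\gamma\in J_{\la}^{'}$ unless $\rho_{\gamma}\in\mathcal{OSP}_{|\la_{\gamma}|}(\Gamma_{*})$. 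This already establishes part (ii), since any $\rho$ outside the prescribed set forces at least one factor to vanish, killing every surviving term in the induction sum.

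Third, for part (i), I would assemble the nonzero contributions. Multiplying the individual Schur values for $\gamma\in J_{\la}$ yields the factor $(\sqrt{-1})^{\sum_{\gamma\in J_{\la}}(|\la_{\gamma}|-l(\la_{\gamma})+1)/2}$ together with $\sqrt{\prod_{\gamma\in J_{\la}}z_{\la_{\gamma}}/2^{|J_{\la}|}}$, since $z_{\la_{\gamma}}=\la_{\gamma,1}\cdots\la_{\gamma,k}$ for strict $\la_{\gamma}$; combining this square root with the prefactor $2^{[|J_{\la}|/2]}$ from (2.6) (and the centralizer ratio $|\widetilde{Z}_{\rho}|/|\widetilde{\Gamma}_{\la}|$ coming from the induction formula) produces the stated $\sqrt{\prod_{\gamma\in J_{\la}}z_{\la_{\gamma}}/2}$. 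The factors $\prod_{c}\gamma(c)^{l(\rho_{\gamma}(c))}$ and $\prod_{\gamma\in J_{\la}^{'}}\Delta_{\la_{\gamma}}(t_{\rho_{\gamma}})$ are exactly the surviving character contributions. The sign $\pm$ between $D_{\rho}^{+}$ and $D_{\rho}^{-}$ is immediate from $\chi_{\la}(z\cdot x)=-\chi_{\la}(x)$ for spin characters.

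The main obstacle will be identifying the precise combinatorial constant $K_{\rho}$: it counts the orbits of the $\widetilde{\Gamma}_\la$-centralizer of $(g,t_\rho)$ on the set of decompositions of $\rho$ compatible with $\la$, together with the index $|\widetilde{Z}_{\rho}|/|\widetilde{\Gamma}_{\la}|$ arising in the Frobenius sum, and must be tracked together with the $\sqrt{-1}$ signs that appear when different cycle orderings within $\widetilde{\Gamma}_{\la}$ differ by central elements. As an independent check on the normalization, substituting the proposed formula into $\langle \chi_{\la}\uparrow,\chi_{\la}\uparrow\rangle_{\mathcal{SP}_n^1(\Gamma_*)}$ must reproduce the value $1/2$ given by Lemma~\ref{lema}, and this forces $K_{\rho}$ to agree with the constraint (\ref{eq:5}) of Proposition~\ref{prop:innerprod}; combined with orthogonality this pins down $K_{\rho}$ uniquely.
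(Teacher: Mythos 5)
Your plan for part (i) is essentially the paper's: apply the Frobenius induction formula, evaluate $\circledast_{\gamma}(\gamma^{\otimes|\la_\gamma|}\otimes\Delta_{\la_\gamma})$ at $(g,t_\rho)$ via the product formula, insert Schur's special value $\Delta_{\la_\gamma}(\la_\gamma)=(\sqrt{-1})^{(|\la_\gamma|-l(\la_\gamma)+1)/2}\sqrt{z_{\la_\gamma}/2}$ for $\gamma\in J_\la$, and absorb the coset count into $K_\rho$; that part is fine and matches the paper at the same level of detail.

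The gap is in part (ii). You claim that ``by Schur's Theorem \ref{Schur}(ii) the factor vanishes for $\gamma\in J_\la$ unless $\rho_\gamma\in[\la_\gamma]$,'' but Theorem \ref{Schur}(ii) only forces vanishing at \emph{odd strict} classes $\mu\neq\la_\gamma$; at classes whose parts are all odd the associate spin characters $\Delta_{\la_\gamma}$ are generically nonzero (these are exactly the ``B'' entries of Table 1, given by the Q-function expansion). Since $|J_\la|$ is odd but can be $\geq 3$, a class $\rho\in\mathcal{SP}^1_n(\Gamma_*)$ outside the prescribed set can admit decompositions $\rho=\cup_\gamma\rho_\gamma$ in which one $\gamma\in J_\la$ receives a component in $[\la_\gamma]$ while other $\gamma\in J_\la$ receive components with all parts odd (for instance $\la_{\gamma_1}=\la_{\gamma_2}=\la_{\gamma_3}=(2,1)$ and $\rho$ with colored parts $3,2,1$ of one color and $3$ of another, split as $(2,1)$, $(3)$, $(3)$): every factor in such a summand is nonzero, and by Remark \ref{lem} the corresponding class is split in $\widetilde{\Gamma}_\la$, so your pointwise argument does not kill these terms. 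The asserted vanishing of $\chi_\la\uparrow(D^\pm_\rho)$ therefore requires cancellation among the surviving summands, and the paper obtains it by a global exhaustion argument rather than factorwise vanishing: using Lemma \ref{lema} the total norm $\langle\chi_\la\uparrow,\chi_\la\uparrow\rangle_{\mathcal{SP}^1_n(\Gamma_*)}$ equals $1/2$, and the computation in the proof (building on Proposition \ref{prop:innerprod} and orthogonality of the characters $\gamma^{\otimes l(\la_\gamma)}$ of $\Gamma^{l(\la_\gamma)}$) shows that the classes with $\rho_\gamma\in[\la_\gamma]$ for all $\gamma\in J_\la$ already contribute exactly $1/2$, which forces all remaining values on $\mathcal{SP}^1_n(\Gamma_*)$ to be zero. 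In your proposal this norm identity appears only as an ``independent check'' used to pin down $K_\rho$; it is in fact the essential mechanism for part (ii), and without it your argument only covers the easy case where some $\gamma\in J_\la'$ has $\rho_\gamma\notin\mathcal{OSP}_{|\la_\gamma|}(\Gamma_*)$, which is handled by Theorem \ref{Schur}(i) exactly as in the paper.
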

\begin{proof}(i)  Let $T$ be a left coset of $\widetilde{\Gamma}_{\la}$ in $\widetilde{\Gamma}_n$
and $K_{\rho}$ is the number of left cosets $T$ of $\widetilde{\Gamma}_{\la}$ in $\widetilde{\Gamma}_n$
such that $(g, t_{\rho})T=T$. Then $t^{-1}(g, t_{\rho})t\in\widetilde{\Gamma}_{\lambda}$
for any left coset representative $t\in T$. It follows from the formula of induced character that
\begin{equation}\label{eq:3}
 \begin{split}
\chi_{\la}\uparrow(D^{\pm}_{\rho})=&\pm\frac{1}{|\widetilde{\Gamma}_{\la}|}
\sum_{T\in\widetilde{\Gamma}_n/\widetilde{\Gamma}_{\lambda}}\big(\sum_{t\in T}\circledast_{\gamma\in\Gamma^{*}}
(\gamma^{\otimes|\la_{\gamma}|}\otimes\Delta_{\la_{\gamma}})(t^{-1}(g, t_{\rho})t)\big)\\
=&\pm K_{\rho}\cdot\circledast_{\gamma\in\Gamma^{*}}
(\gamma^{\otimes|\la_{\gamma}|}\otimes\Delta_{\la_{\gamma}})(g, t_{\rho})\\
=&\pm K_{\rho} \cdot 2^{\frac{|J_{\la}|-1}{2}}
\prod_{\gamma\in\Gamma^{*}}
(\gamma^{\otimes|\la_{\gamma}|}\otimes\Delta_{\la_{\gamma}})(D^{+}_{\rho_{\gamma}}).\\
\end{split}
\end{equation}
In the above we have used
$ \chi_{\la}\uparrow(D^{-}_{\rho})=- \chi_{\la}\uparrow(D^{+}_{\rho})$, and the second line
vanishes if $t_{\rho}\notin\widetilde{S}_{\lambda}$.  Moreover, by \cite{FJW},
$$ \gamma^{\otimes|\la_{\gamma}|}\otimes\Delta_{\la_{\gamma}}(D^{+}_{\rho_{\gamma}})
=\prod_{c\in\Gamma_{*}}\gamma(c)^{l(\rho_{\gamma}(c))}\cdot\Delta_{\la_{\gamma}}(t_{\rho_{\gamma}}).$$
So  Eq.  (\ref{eq:3}) is equal to
\begin{equation}
\begin{split}
=&\pm K_{\rho}\cdot 2^{\frac{|J_{\la}|-1}{2}}
\prod_{\gamma\in\Gamma^{*}}
 \prod_{c\in\Gamma_{*}}\gamma(c)^{l(\rho_{\gamma}(c))}\cdot\prod_{\gamma\in \Gamma^{*}}\Delta_{\la_{\gamma}}(t_{\rho_{\gamma}})\\
=&\pm K_{\rho}\cdot2^{\frac{|J_{\la}|-1}{2}}
\prod_{\gamma\in\Gamma^{*}}
\prod_{c\in\Gamma_{*}}\gamma(c)^{l(\rho_{\gamma}(c))}\cdot\prod_{\gamma\in
J_{\la}^{'}} \Delta_{\la_{\gamma}}(t_{\rho_{\gamma}})\cdot\\
& \qquad\qquad\prod_{\gamma\in
J_{\la}}(\sqrt{-1})^{\frac{|\rho_{\gamma}|-l(\rho_{\gamma})+1}{2}}
\sqrt{\frac{\prod_{c\in\Gamma_{*}}z_{\rho_{\gamma}(c)}}{2}}\\
=&\pm K_{\rho}\prod_{\gamma\in\Gamma^{*}}\prod_{c\in\Gamma_{*}}\gamma(c)^{l(\rho_{\gamma}(c))}
\cdot\prod_{\gamma\in J_{\la}^{'}}\Delta_{\la_{\gamma}}(t_{\rho_{\gamma}})\cdot\\
& \qquad\qquad(\sqrt{-1})^{\Sigma_{\gamma\in J_{\la}}
\frac{|\la_{\gamma}|-l(\la_{\gamma})+1}{2}}\sqrt{\frac{\prod_{\gamma\in
J_{\la}}z_{\la_{\gamma}}}{2}}.
\end{split}
\end{equation}

(ii) Note that we can assume that $\rho=\cup_{\gamma\in\Gamma^{*}}\rho_{\gamma}$, where $\rho_{\gamma}\in \mathcal{SP}_{|\la_{\gamma}|}(\Gamma_{*})$.
Otherwise it is easy to see that $\chi_{\la}\uparrow(\rho)=0$.

(1) If $\rho_{\gamma}\in\mathcal{OSP}_{|\rho_{\gamma}|}(\Gamma_{*})$ for $\gamma\in J_{\la}^{'}$, we show that $\rho_{\gamma}\in [\la_{\gamma}]$ for $\gamma\in J_{\la}$. We denote $m=|J_{\la}|$. It follows from Proposition \ref{prop:innerprod} that
\begin{equation}\label{eq:4}
\begin{split}
&\langle\chi_{\la}\uparrow,\chi_{\la}\uparrow\rangle_{\mathcal{SP}^1_n(\Gamma_{*})}\\
=&2^{m-1}\prod_{\gamma\in
J_{\la}}\big(\sum_{\rho_{\gamma}\in\mathcal{SP}^1_{|\la_{\gamma}|}(\Gamma_{*})}
 \frac{1}{\widetilde{Z}_{\rho_{\gamma}}}|\prod_{c\in\Gamma_{*}}\gamma(c)^{l(\rho_{\gamma}(c))}
  \Delta_{\la_{\gamma}}(z^pt_{\rho_{\gamma}})|^2\big)\\
\geq&2^{m-1}\prod_{\gamma\in J_{\la}}\big(\sum_{\rho_{\gamma}\in
[\la_{\gamma}]}
\frac{1}{Z_{\rho_{\gamma}}}|\prod_{c\in\Gamma_{*}}\gamma(c)^{l(\rho_{\gamma}(c))}
 \Delta_{\la_{\gamma}}(t_{\rho_{\gamma}})|^2\big)\\
=&2^{m-1}\prod_{\gamma\in J_{\la}}\big(\sum_{\rho_{\gamma}\in [\la_{\gamma}]} \big(\prod_{c\in\Gamma_{*}}
\frac{|\gamma(c)|^{2l(\rho_{\gamma}(c))}}
{z_{\rho_{\gamma}(c)}\zeta_c^{l(\rho_{\gamma}(c))}}\big)\left|\sqrt{\frac{\prod_{c\in\Gamma_{*}}z_{\rho_{\gamma}(c)}}{2}}\right|^2\big)\\
=&2^{m-1}\prod_{\gamma\in J_{\la}}\big(\sum_{\rho_{\gamma}\in [\la_{\gamma}]}\frac{1}{2}
\frac{\prod_{c\in\Gamma_{*}}|\gamma(c)|^{2l(\rho_{\gamma}(c))}}
{\prod_{c\in\Gamma_{*}}\zeta_c^{l(\rho_{\gamma}(c))}}\big)\\
\end{split}
\end{equation}

  Since  $\rho_{\gamma}\in [\la_{\gamma}]$, $l(\rho_{\gamma})=l(\la_{\gamma})$  and $$\gamma(c)^{l(\rho_{\gamma}(c))}=\gamma^{\otimes l(\rho_{\gamma}(c))}\cdot
  {(\underset{l(\rho_{\gamma}(c))}{\underbrace{c,\cdots,c}})},$$
hence we have that
\begin{equation}
\begin{split}
&\prod_{c\in\Gamma_{*}}\gamma(c)^{l(\rho_{\gamma}(c))}
=\gamma^{\otimes l(\rho_{\gamma})}\cdot
(\underset{l(\rho_{\gamma}(c^0))}{\underbrace{c^0,\cdots,c^0}},\cdots,\underset{l(\rho_{\gamma}(c^r))}
{\underbrace{c^r,\cdots,c^r}})\\
=&\gamma^{\otimes
l(\rho_{\gamma}(c^0))}\otimes\cdots\otimes\gamma^{\otimes
l(\rho_{\gamma}(c^r))}\cdot
(\underset{l(\rho_{\gamma}(c^0))}{\underbrace{c^0,\cdots,c^0}},\cdots,\underset{l(\rho_{\gamma}(c^r))}
{\underbrace{c^r,\cdots,c^r}})\\
=&\gamma^{\otimes l(\rho_{\gamma})}\cdot
(c^{i_1},c^{i_2},\cdots,c^{i_{l(\rho_{\gamma})}})\\
\end{split}
\end{equation}
which satisfies that the number of $c^{j} (j=0,\cdots,r)$ in $\{c^{i_1},c^{i_2},\cdots,c^{i_{l(\rho_{\gamma})}}\}$ is equal to $l(\rho_{\gamma}(c^j))$.
Clearly $C_{\rho_{\gamma}}=(c^{i_1},c^{i_2},\cdots,c^{i_{l(\rho_{\gamma})}})$ is a
conjugacy class of  $\Gamma^{l(\la_{\gamma})}$. Let
$\zeta_{C_{\rho_{\gamma}}}$ be the order of  the centralizer of an
element in the conjugacy class $C_{\rho_{\gamma}}$ of
$\Gamma^{l(\la_{\gamma})}$, then
$\zeta_{C_{\rho_{\gamma}}}=\prod_{c\in\Gamma_{*}}\zeta_c^{l(\rho_{\gamma}(c))}$.
As $\rho_{\gamma}\in [\la_{\gamma}]$, so $C_{\rho_{\gamma}}$ can run through all conjugacy classes in $\Gamma^{l(\la_{\gamma})}$. For example,
if we assume $[\la_{\gamma}]=\{\rho_{\gamma}\in \mathcal{SP}_{11}^1(\Gamma_{*})|\la_{\gamma}=(5, 4,2)\}$, then $\rho_{\gamma}=(5_{\square_1}, 4_{\square_2}, 2_{\square_3})$ and each $\square_j$ can run from $c^0$ to $c^r$. So $C_{\rho_{\gamma}}=(\square_1, \square_2, \square_3)$ runs through
all conjugacy classes of $\Gamma^{3}$.
Subsequently Eq. (\ref{eq:4}) becomes
\begin{equation}
\begin{split}
=&2^{m-1}\prod_{\gamma\in J_{\la}}(\sum_{C_{\rho_{\gamma}}\in (\Gamma^{l(\la_{\gamma})})_{*}}
\frac{1}{2\zeta_{C_{\rho_{\gamma}}}}\cdot\\
& |\gamma^{\otimes l(\rho_{\gamma}(c^0))}\otimes\cdots\otimes\gamma^{\otimes l(\rho_{\gamma}(c^r))}\cdot
(\underset{l(\rho_{\gamma}(c^0))}{\underbrace{c^0,\cdots,c^0}},\cdots,\underset{l(\rho_{\gamma}(c^r))}
{\underbrace{c^r,\cdots,c^r}})|^2)\\
=&2^{m-1}\prod_{\gamma\in J_{\la}}\big(\sum_{C_{\rho_{\gamma}}\in (\Gamma^{l(\la_{\gamma})})_{*}}
\frac{1}{2\zeta_{C_{\rho_{\gamma}}}}\cdot|\gamma^{\otimes l(\rho_{\gamma})}(c^{i_1},c^{i_2},\cdots,c^{i_{l(\rho_{\gamma})}})|^2\big)\\
=&2^{m-1}\prod_{\gamma\in J_{\la}}(\frac{1}{2}\sum_{C_{\rho_{\gamma}}\in (\Gamma^{l(\la_{\gamma})})_{*}}
\frac{1}{\zeta_{C_{\rho_{\gamma}}}}|\gamma^{\otimes l(\la_{\gamma})}(C_{\rho_{\gamma}})|^2)\\
=&2^{m-1}\prod_{\gamma\in J_{\la}}\frac{1}{2}\langle  \gamma^{\otimes l(\la_{\gamma})},
\gamma^{\otimes l(\la_{\gamma})}\rangle_{\Gamma^{l(\la_{\gamma})}}
=\frac{1}{2},
\end{split}
\end{equation}
which forces $\chi_{\la}\uparrow(D^{\pm}_{\rho})=0$ if  $\rho_{\gamma}\notin [\la_{\gamma}]$ for $\gamma\in J_{\la}$.

(2) If $\rho_{\gamma}\notin\mathcal{OSP}_{|\la_{\gamma}|}(\Gamma_{*})$ for $\gamma\in J_{\la}^{'}$, then there is at least one $\rho_{\gamma}$ not in $\mathcal{OP}_{|\la_{\gamma}|}(\Gamma_{*})$ for $\gamma\in J_{\la}^{'}$. Meanwhile, $\Delta_{\la_{\gamma}}$ is a double spin character
 when $\gamma\in J_{\la}^{'}$, so we have
$\Delta_{\la_{\gamma}}(t_{\rho_{\gamma}})=0$, thus $\chi_{\la}\uparrow(D^{\pm}_{\rho})=0$.
This completes the proof.
\end{proof}

\bigskip

\centerline{\bf Acknowledgments}
The second author
gratefully acknowledges the partial support of Max-Planck Institut f\"ur Mathematik in Bonn, Simons Foundation,
NSFC 
and NSF during this work.

\bibliographystyle{amsalpha}

\begin{thebibliography}{99}

\bibitem{FJW} I. B. Frenkel, N. Jing and W. Wang, {\em Twisted Vertex representations via spin groups
and the McKay correspondence}, Duke Math. J. 111 (2002), 51--96.

\bibitem{HH} P. N. Hoffman and J. F. Humphreys, {\em Hopf algebras and
projective representations of $G\wr S_n$ and $G\wr A_n$}, Can. J.
Math. 38 (1986), 1380--1458.

\bibitem{HJ} X. Hu and N. Jing, {\em Spin characters of generalized symmetric groups}, Monatsh. Math.,
(arxiv:1210.1629).

\bibitem{J} N. Jing, {\em Vectex operators, symmetric functions, and the spin group $\Gamma_n$}, J. Algebra 138 (1991), 340--398.

\bibitem{Jo} T. J\'{o}zefiak, {\em A class of projective representations of hyperoctahedral group and Schur $Q$-functions}, Banch center publictions. Topics in Algebra. 26(1990), 193--247.

\bibitem{M} I. G. Macdonald, {\em Symmetric functions and Hall polynomials},
 2nd ed., with contributions by A. Zelevinsky. Oxford Univ. Press, New York, 1995.

\bibitem{Mo} A. O. Morris, {\em The spin representation of the symmetric group}, Proc. London. Math. Soc. 12
(1962), 55--76.

\bibitem{MJ} A. O. Morris and H. I. Jones,
{\em Projective representations of generalized symmetric groups}.
S\'em. Lothar. Combin. 50 (2003/04), Art. B50b, 27 pp. (electronic).

\bibitem{Na} M. Nazarov, {\em Young's symmetrizers for projective representations of the symmetric group}, Adv. Math. 127 (1997), no. 2, 190--257.

\bibitem{R} E. W. Read, {\em The $\alpha$-regular classes of the generalized symmetric group}, Glasgow Math. J. 17 (1976), no. 2, 144--150.

\bibitem{S} I. Schur, {\em \"{U}ber die Darstellung der symmertrischen und
der alternierenden Gruppe durch gebrochene lineare Substitutionen},
J. Reine Angew. Math. 139 (1911), 155--250.

\bibitem{Se} J.-P. Serre, {\em Repr\'esentations lin\'eaires des groupes finis}. 3e ed., Hermann, Paris, 1978.

\bibitem{Sp} W. Specht, {\em Eine Verallgemeinerung der symmetrischen Gruppen}, Schriffen Math. Sem. Beelin 1 (1932), 1--32.

\bibitem{Ws} D. B. Wales, {\em Some projective representations of $S_n$}, J. Algebra 61  (1979), 37--57.

\end{thebibliography}

\end{document}